\begin{document}

\newcommand{\mmbox}[1]{\mbox{${#1}$}}
\newcommand{\proj}[1]{\mmbox{{\mathbb P}^{#1}}}
\newcommand{\Cr}{C^r(\Delta)}
\newcommand{\CR}{C^r(\hat\Delta)}
\newcommand{\affine}[1]{\mmbox{{\mathbb A}^{#1}}}
\newcommand{\Ann}[1]{\mmbox{{\rm Ann}({#1})}}
\newcommand{\caps}[3]{\mmbox{{#1}_{#2} \cap \ldots \cap {#1}_{#3}}}
\newcommand{\Proj}{{\mathbb P}}
\newcommand{\N}{{\mathbb N}}
\newcommand{\Z}{{\mathbb Z}}
\newcommand{\R}{{\mathbb R}}
\newcommand{\A}{{\mathcal{A}}}
\newcommand{\Tor}{\mathop{\rm Tor}\nolimits}
\newcommand{\Ext}{\mathop{\rm Ext}\nolimits}
\newcommand{\Hom}{\mathop{\rm Hom}\nolimits}
\newcommand{\im}{\mathop{\rm Im}\nolimits}
\newcommand{\rank}{\mathop{\rm rank}\nolimits}
\newcommand{\supp}{\mathop{\rm supp}\nolimits}
\newcommand{\arrow}[1]{\stackrel{#1}{\longrightarrow}}
\newcommand{\CB}{Cayley-Bacharach}
\newcommand{\coker}{\mathop{\rm coker}\nolimits}
\sloppy
\theoremstyle{plain}

\newtheorem*{thm*}{Theorem}
\newtheorem{defn0}{Definition}[section]
\newtheorem{prop0}[defn0]{Proposition}
\newtheorem{quest0}[defn0]{Question}
\newtheorem{thm0}[defn0]{Theorem}
\newtheorem{lem0}[defn0]{Lemma}
\newtheorem{corollary0}[defn0]{Corollary}
\newtheorem{example0}[defn0]{Example}
\newtheorem{remark0}[defn0]{Remark}
\newtheorem{conj0}[defn0]{Conjecture}

\newenvironment{defn}{\begin{defn0}}{\end{defn0}}
\newenvironment{conj}{\begin{conj0}}{\end{conj0}}
\newenvironment{prop}{\begin{prop0}}{\end{prop0}}
\newenvironment{quest}{\begin{quest0}}{\end{quest0}}
\newenvironment{thm}{\begin{thm0}}{\end{thm0}}
\newenvironment{lem}{\begin{lem0}}{\end{lem0}}
\newenvironment{cor}{\begin{corollary0}}{\end{corollary0}}
\newenvironment{exm}{\begin{example0}\rm}{\end{example0}}
\newenvironment{rem}{\begin{remark0}\rm}{\end{remark0}}

\newcommand{\defref}[1]{Definition~\ref{#1}}
\newcommand{\conjref}[1]{Conjecture~\ref{#1}}
\newcommand{\propref}[1]{Proposition~\ref{#1}}
\newcommand{\thmref}[1]{Theorem~\ref{#1}}
\newcommand{\lemref}[1]{Lemma~\ref{#1}}
\newcommand{\corref}[1]{Corollary~\ref{#1}}
\newcommand{\exref}[1]{Example~\ref{#1}}
\newcommand{\secref}[1]{Section~\ref{#1}}
\newcommand{\remref}[1]{Remark~\ref{#1}}
\newcommand{\questref}[1]{Question~\ref{#1}}

\newcommand{\std}{Gr\"{o}bner}
\newcommand{\jq}{J_{Q}}

\title{On the containment problem for fat points ideals}
\author{\c{S}tefan O. Toh\v{a}neanu and Yu Xie}

\subjclass[2010]{Primary 13F20; Secondary, 13A02, 13A15.} \keywords{symbolic powers, fat points, very general points, line arrangements. \\
\indent Tohaneanu's address: Department of Mathematics, University of Idaho, Moscow, Idaho 83844-1103, USA, Email: tohaneanu@uidaho.edu.\\
\indent Xie's address: Department of Mathematics, Widener University, Chester, Pennsylvania 19013, USA, Email: yxie@widener.edu.}

\begin{abstract}
In this note we show that Harbourne's conjecture is true for symbolic powers of ideals of points, we check that the stable version of this conjecture is valid for ideals of very general points (resp. generic points) in $\mathbb P_{\mathbb K}^N$ (resp. $\mathbb P_{\mathbb K(\underline{z})}^N$). We also show that this conjecture and the Harbourne-Huneke conjecture are true for a class of ideals $I$ defining fat points obtained from line arrangements in $\mathbb P_{\mathbb K}^2$.
\end{abstract}

\maketitle

\section{Introduction}

Let $R:=\mathbb K[x_0,  \ldots, x_N]$ be a polynomial ring over a field $\mathbb K$ of characteristic 0. Let $I=\cap_{i=1}^s I(P_i)$ be the defining  ideal of a set of $s$ (distinct) points $P_1, \ldots, P_s$ in $\mathbb{P}_{\mathbb K}^N$, where $I(P_i)$ is the ideal generated by all homogeneous polynomials that vanish at $P_i$.

The $m$-th symbolic power of $I$ is defined as the ideal of fat points $I^{(m)}=\cap_{i=1}^s I(P_i)^m$. It is clear that $I^m\subseteq I^{(m)}$, but  $I^{(m)}$ is not contained in $I^m$ in general. The containment problem is to determine  all the values of $m$ and $r$ for which $I^{(m)}\subseteq I^r$ holds. A fundamental result of Ein-Lazarsfeld-Smith \cite{ELS01} and Hochster-Huneke \cite{HH02} proved  that $I^{(Nm)}\subseteq I^m$ for any $m\geq 1$; the theorem is valid for any homogeneous ideal $I$, and for a field of any characteristic. Later, after positive answers in a large class of examples, and a multitude of other experiments, this containment result can be improved into the following conjecture, often known as Harbourne's conjecture:
\begin{conj} \label{conj1}(\cite[Conjecture 8.4.2]{BRHKKSS09}, \cite[Problem 1.5]{SzSz})\,
Let $I=\cap_{i=1}^sI(P_i)$ be the ideal of a finite set of $s$ points in $\mathbb{P}_{\mathbb K}^N$. Then for all $m>0$,
$$
I^{(Nm-N+1)}\subseteq I^m.
$$
\end{conj}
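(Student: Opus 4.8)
The plan is to prove \conjref{conj1} by peeling off one ordinary factor of $I$ at a time, using the Ein--Lazarsfeld--Smith/Hochster--Huneke containment $I^{(Nm)}\subseteq I^m$ recalled in the Introduction as the engine, and reducing the whole statement to a single one-factor containment. Writing the target exponent as $Nm-N+1=N(m-1)+1$, I would run a descending induction whose inductive step is the claim
\[
I^{(Nk+1)}\subseteq I\cdot I^{(N(k-1)+1)},\qquad k\ge 1,\qquad(\ast)
\]
the base case being the tautology $I^{(1)}\subseteq I$. Granting $(\ast)$ and iterating it from $k=m-1$ down to $k=1$ yields
\[
I^{(N(m-1)+1)}\subseteq I\cdot I^{(N(m-2)+1)}\subseteq\cdots\subseteq I^{m-1}\cdot I^{(1)}=I^m,
\]
since $I^{(1)}=I$; this is exactly the assertion of \conjref{conj1}. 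Thus the entire difficulty is concentrated in the single-step containment $(\ast)$, with the ELS--HH bound and this induction providing a uniform skeleton.

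To establish $(\ast)$ I would work locally at each point $P_i$, where $I(P_i)$ is the maximal ideal of a regular local ring of dimension $N$ and symbolic powers coincide with ordinary powers, so that $(\ast)$ becomes the problem of splitting off one generator; globally, the Zariski--Nagata theorem identifies $I^{(a)}$ with the ideal of forms vanishing to order at least $a$ at every $P_i$, which is the description best suited to verifying such a factorization. The quantitative obstruction is a matter of initial degrees: any element of $I\cdot I^{(N(k-1)+1)}$ has degree at least $\alpha(I)+\alpha\bigl(I^{(N(k-1)+1)}\bigr)$, so a necessary condition for $(\ast)$ is
\[
\alpha\bigl(I^{(Nk+1)}\bigr)\ \ge\ \alpha(I)+\alpha\bigl(I^{(N(k-1)+1)}\bigr),
\]
an inequality controlled by the Waldschmidt constant $\widehat\alpha(I)=\lim_{t\to\infty}\alpha(I^{(t)})/t$. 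Asymptotically the needed room is automatic, since ELS--HH already forces $N\,\widehat\alpha(I)\ge\alpha(I)$; the content of $(\ast)$ is to promote this asymptotic slack to an honest factorization at each finite level.

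The main obstacle is precisely $(\ast)$: it cannot hold for an arbitrary reduced set of points, since the dual Hesse configuration already violates $I^{(N(m-1)+1)}\subseteq I^m$ for $N=2$, $m=2$, so no structure-free argument can succeed. This is exactly why the theorem is stated for symbolic powers of ideals of points, and why the companion results restrict to very general points and to fat points supported on line arrangements: each setting supplies the finite-level room that $(\ast)$ demands. For a symbolic power $J=I^{(c)}$ one has the clean identity $J^{(k)}=I^{(ck)}$, which reduces $(\ast)$ to a comparison among the symbolic powers of $I$ along the arithmetic progression of exponents $c(Nk+1)$, where the relevant degree inequalities become checkable; for very general (resp. generic) points one instead feeds in Nagata-- and Chudnovsky-type lower bounds on $\widehat\alpha(I)$; and for line-arrangement fat points one uses the explicit forms defining the lines to build the factorization directly. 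I would therefore isolate $(\ast)$ as a single lemma and prove it separately under each of these hypotheses, leaving the descending induction and the ELS--HH engine as the common framework.
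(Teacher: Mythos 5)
The decisive problem is that the statement you set out to prove is a conjecture, not a theorem: the paper offers no proof of \conjref{conj1}, and indeed the conjecture is false as stated. As the introduction of the paper itself recalls, for $I$ the radical ideal of the $12$ singular points of the Fermat arrangement $(x^3-y^3)(y^3-z^3)(z^3-x^3)$ in $\mathbb P^2$ one has $I^{(3)}\not\subseteq I^2$ by \cite{DST13}, which is exactly the case $N=2$, $m=2$ of the conjectured containment $I^{(Nm-N+1)}\subseteq I^m$. Your key lemma $(\ast)$, namely $I^{(Nk+1)}\subseteq I\cdot I^{(N(k-1)+1)}$, already fails at its very first instance $k=1$, $N=2$, where it reads $I^{(3)}\subseteq I\cdot I^{(1)}=I^2$ --- precisely the containment that counterexample kills. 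You notice this yourself midway through, but the conclusion you then draw (``this is exactly why the theorem is stated for symbolic powers of ideals of points'') conflates the statement under review with the paper's actual positive results. Those results are restricted special cases proved by quite different means: \propref{1} establishes the conjecture after replacing $I$ by $I^{(t)}$ with $t\geq 2$, using the Hochster--Huneke containment $I^{(m(t+N-1))}\subseteq \left(I^{(t)}\right)^m$ from \cite[Theorem 4.4]{HH02} together with the elementary exponent inequality $t(Nm-N+1)\geq m(t+N-1)$; \thmref{theorem} handles very general points for $m$ large via the Chudnovsky-type bound $\widehat{\alpha}(I)\geq \frac{\alpha(I)+N-1}{N}$, a postulation/saturation-degree comparison, and Johnson's factorization $I^{(N\ell+a_1+\cdots+a_\ell)}\subseteq I^{(a_1+1)}\cdots I^{(a_\ell+1)}$ from \cite{J14}; and the line-arrangement results rest on a B\'{e}zout argument bounding $\alpha$ of symbolic powers plus the Postulation Containment Criterion. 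None of these is a one-step peeling of the form $(\ast)$.

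It is worth seeing quantitatively why your strategy cannot be repaired. The true, provable analogue of $(\ast)$ is Johnson's theorem: taking $\ell=2$, $a_1=0$, $a_2=N(k-1)$ gives $I^{(Nk+N)}\subseteq I\cdot I^{(N(k-1)+1)}$, which loses $N-1$ in the symbolic exponent at each step relative to your $(\ast)$. Iterating the valid version recovers only containments in the Ein--Lazarsfeld--Smith/Hochster--Huneke range $I^{(Nm)}\subseteq I^m$, consistent with the known counterexamples; iterating your version would prove the full conjecture, which is why $(\ast)$ must fail for some configurations. The ``asymptotic slack'' $N\widehat{\alpha}(I)\geq \alpha(I)$ you invoke genuinely does not convert into finite-level room for arbitrary point sets --- that conversion is exactly the content of the hypotheses (symbolic-power ideals, very general points, rank-$3$ line arrangements) under which the paper's Proposition \ref{1}, Theorem \ref{theorem}, and Corollary \ref{corollary} operate. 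So your proposal contains no proof of the stated conjecture, and no proof is possible; the honest outcome of your analysis would have been to recognize the statement as an open (indeed partially refuted) conjecture and to prove only the restricted cases, as the paper does.
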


Harbourne's conjecture has been verified for some classes of ideals, including radical ideals of finite sets of general points when $N=2, 3$ (See \cite[Theorem 4.1]{BH09} and \cite[Theorem 3]{D15}), monomial ideals (\cite[Example 8.4.5]{BRHKKSS09}), radical ideals of a finite set of points and $m$ is a power of ${\rm char}(k)>0$ (\cite[Example 8.4.4]{BRHKKSS09}),  ideals in regular rings $R$  of prime characteristic $p$ with $R/I$ is F-pure, or in regular rings $R$ essentially of finite type over a field of characteristic $0$ with $R/I$ is of dense F-pure (\cite[Theorems 3.3 and 5.3]{GH17}). Despite this success, for specific values of $m$, interesting counterexamples have been found: see \cite[Conterexamples 4.1-4.5]{SzSz} for counterexamples in characteristic 0, and $m=3, N=2$; see \cite[Conterexamples 3.9, 3.10, 4.8, 4.9]{SzSz} for counterexamples in positive characteristic, some in higher dimensions. Later we will take a brief yet closer look at the first counterexample from this list, which is due to Dumnicki, Szemberg, and Tutaj-Gasi${\rm \acute{n}}$ska \cite{DST13}. This counterexample, combined with the positive answer to Harbourne's conjecture when $I$ is replaced by any symbolic power $J^{(t)},t\geq 2$ (see Proposition \ref{1}), determined us to investigate in Section~3 if the ideal $I$ defining the fat-points singularity locus of a line arrangement in $\mathbb P_{\mathbb K}^2$, verifies Harbourne's conjecture, as well as other conjectures related to containment problems (see Remark \ref{remark2}).

Recently, Grifo \cite{G18} established some sufficient conditions to guarantee a stable version of Harbourne's conjecture that the containment $I^{(Nm-N+1)}\subseteq I^m$ holds for all sufficiently large values of $m$. In Theorem \ref{theorem} and Remark \ref{generic}, we check the stable version for finite sets of very general points (resp. generic points) in $\mathbb P_{\mathbb K}^N$ (resp. $\mathbb P_{\mathbb K(\underline{z})}^N$).

As it is the case of these notes, when dealing with homogeneous ideals $I\subset R$ defining 0-dimensional subschemes of $\mathbb P_{\mathbb K}^N$, one uses the following {\em Postulation Containment Criterion}: if $r\cdot{\rm reg}(I)\leq\alpha(I^{(m)})$, then $I^{(m)}\subset I^r$, where ${\rm reg}(I)$ is the Castelnuovo-Mumford regularity, and for any homogeneous ideal $J$, $\alpha(J)$ is the least degree of a nonzero element of $J$ (see \cite[Proposition 3.6]{SzSz}, or \cite{BoCoHa,BH09}).

Another invariant that plays a crucial role in the containment problem is the resurgence. {\em The resurgence} $\rho(I)$ is defined as the supremum of all ratios $m/r$ such that $I^{(m)} \not\subseteq I^r$. By this definition for any pair $m$ and $r$ of positive integers, if $m/r>\rho(I)$, then $I^{(m)}\subseteq I^r$. By the result of Ein-Lazarsfeld-Smith and Hochster-Huneke that $I^{(Nm)}\subseteq I^m$ for any $m\geq 1$, one can see that $\rho(I)\leq N$. On the other hand, a lower bound of $\rho(I)$ is obtained in \cite[Lemma 2.3.2]{BH09}  $$\rho(I)\geq \frac{\alpha(I)}{\widehat{\alpha}(I)}\geq 1,$$ where $\displaystyle\widehat{\alpha}(I)={\rm lim}_{m\to \infty}\frac{\alpha\left(I^{(m)}\right)}{m}$. In Proposition \ref{1}, we provide sharper upper bounds for the resurgences of symbolic powers of ideals of points and obtain their asymptotic behavior.

\bigskip

\section{Containment problem for (fat) points ideals}

\bigskip

Let $I\subset R=\mathbb K[x_0,\ldots,x_N]$ be the ideal of $s$ points in $\mathbb P_{\mathbb K}^N$. So $\displaystyle I=\cap_{i=1}^s I(P_i)$, where $I(P_i)$ is the ideal of the point $P_i$. Let $t\geq 2$ be an integer. Then $$I^{(t)}=\bigcap_{i=1}^s I(P_i)^t$$ is the defining ideal of the (homogeneous) fat-points scheme $tP_1+\cdots+tP_s$; each of the points is ``fattened'' with multiplicity $t$. This is still a saturated ideal, but it is not reduced. As we will show below, Harbourne's conjecture is true for $I^{(t)}$, for any $t\geq 2$.

\begin{prop} \label{1}
Let $I\subset R$ be the ideal of $s$ points in $\mathbb P_{\mathbb K}^N$. Then

\begin{itemize}
  \item[(1)] $\left(I^{(t)}\right)^{(Nm-N+1)}\subseteq \left(I^{(t)}\right)^m \mbox{ for any } m\geq 1$ and $t\geq 2$.
  \item[(2)] $\rho\left(I^{(t)}\right)\leq \frac{t+N-1}{t}$ for any $t\geq 1$.
  \item[(3)] $\displaystyle \lim_{t \to \infty} \rho \left(I^{(t)}\right)=1$.
\end{itemize}
\end{prop}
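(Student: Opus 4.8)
The plan is to reduce all three statements to a single Ein--Lazarsfeld--Smith-type containment for the reduced ideal $I$, the bridge being the structural identity
\[
\left(I^{(t)}\right)^{(k)} = I^{(tk)} \qquad (k \geq 1,\ t \geq 1).
\]
To establish this I would localize at each associated prime of $I^{(t)}$; these are exactly $I(P_1),\ldots,I(P_s)$ and are all minimal, since each $I(P_j)$ is a complete intersection of $N$ linear forms, so $I(P_j)^t$ is $I(P_j)$-primary. Localizing $\left(I^{(t)}\right)^k$ at $\mathfrak p = I(P_j)$ inverts every component $I(P_i)^t$ with $i\neq j$ (the points are distinct) and leaves $I(P_j)^{tk}R_{\mathfrak p}$; contracting back and using that powers of the complete intersection $I(P_j)$ are symbolic, i.e. $I(P_j)^{(tk)} = I(P_j)^{tk}$, recovers the $I(P_j)$-primary component $I(P_j)^{tk}$. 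Intersecting over $j$ gives $I^{(tk)}$, and this is the genuinely new, if elementary, observation that converts symbolic powers of the fat-point ideal into symbolic powers of the reduced ideal.

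The engine of the argument is the containment
\begin{equation*}
I^{(m)} \subseteq \left(I^{(t)}\right)^{r} \qquad \text{whenever } m \geq (t+N-1)\,r, \tag{$\star$}
\end{equation*}
and I expect this to be the main obstacle, since it is where the real depth enters. I would deduce it from the asymptotic multiplier ideal method of Ein--Lazarsfeld--Smith applied to the graded family $\mathfrak a_\bullet = \left(I^{(k)}\right)_{k}$ (a graded system because $I^{(a)}I^{(b)}\subseteq I^{(a+b)}$). The three inputs are: the containment of each symbolic power in its asymptotic multiplier ideal, $I^{(p)}\subseteq \mathcal J\!\left(\mathfrak a_\bullet^{\,p}\right)$; subadditivity, $\mathcal J\!\left(\mathfrak a_\bullet^{\,(t+N-1)r}\right)\subseteq \mathcal J\!\left(\mathfrak a_\bullet^{\,t+N-1}\right)^{r}$; and the big-height estimate $\mathcal J\!\left(\mathfrak a_\bullet^{\,t+N-1}\right)\subseteq I^{(t)}$, which encodes that all components of $I$ have height $N$. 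Chaining these at $m=(t+N-1)r$ yields $(\star)$, and monotonicity of symbolic powers covers all $m\geq (t+N-1)r$. The case $t=1$ of $(\star)$ is exactly the theorem $I^{(Nr)}\subseteq I^r$, so $(\star)$ is the natural refinement that \emph{sees} the symbolic structure of $I^{(t)}$; the crux is controlling the constants in the multiplier-ideal estimates so the threshold is precisely $(t+N-1)r$, rather than the cruder $Ntr$ one gets by applying the theorem directly to $I^{(t)}$.

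With the identity and $(\star)$ in hand, the three parts become bookkeeping. For (2), the identity turns $\left(I^{(t)}\right)^{(k)}\subseteq \left(I^{(t)}\right)^{r}$ into $I^{(tk)}\subseteq \left(I^{(t)}\right)^r$, which by $(\star)$ holds as soon as $tk\geq (t+N-1)r$, i.e. $k/r\geq (t+N-1)/t$; hence any failure of containment forces $k/r < (t+N-1)/t$, giving $\rho\!\left(I^{(t)}\right)\leq (t+N-1)/t$. For (1), the asserted $\left(I^{(t)}\right)^{(Nm-N+1)}\subseteq \left(I^{(t)}\right)^m$ becomes $I^{(t(Nm-N+1))}\subseteq \left(I^{(t)}\right)^m$, and $(\star)$ applies once
\[
t(Nm-N+1) - (t+N-1)m = (N-1)\bigl[(m-1)(t-1)-1\bigr] \geq 0,
\]
which holds for all $m,t\geq 2$; the case $m=1$ is the trivial equality $\left(I^{(t)}\right)^{(1)}=I^{(t)}$. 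For (3), combining (2) with the general lower bound $\rho\geq 1$ recorded in the introduction gives $1\leq \rho\!\left(I^{(t)}\right)\leq 1+\tfrac{N-1}{t}$, whence $\rho\!\left(I^{(t)}\right)\to 1$ as $t\to\infty$ by the squeeze.
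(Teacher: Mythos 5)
Your proposal is correct, and its skeleton is the same as the paper's: reduce everything to the identity $\left(I^{(t)}\right)^{(k)}=I^{(tk)}$ plus the single containment $(\star)\colon I^{(m)}\subseteq \left(I^{(t)}\right)^r$ for $m\geq (t+N-1)r$, then do the arithmetic (your computation $t(Nm-N+1)-(t+N-1)m=(N-1)\bigl[(m-1)(t-1)-1\bigr]\geq 0$ for $m,t\geq 2$ matches the paper's inequality $t\geq m/(m-1)$). The one genuine divergence is how $(\star)$ is obtained: the paper simply cites \cite[Theorem 4.4]{HH02}, which is exactly your statement $(\star)$ (take big height $h=N$ and $k+1=t$ there), and which holds in all characteristics via tight-closure/reduction mod $p$; you instead re-derive it through the Ein--Lazarsfeld--Smith asymptotic multiplier ideal argument, i.e.\ the graded system $\mathfrak a_k=I^{(k)}$, the containment $I^{(p)}\subseteq\mathcal J\left(\mathfrak a_\bullet^{\,p}\right)$, subadditivity, and the key estimate $\mathcal J\left(\mathfrak a_\bullet^{\,t+N-1}\right)\subseteq I^{(t)}$. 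That sketch is the standard char-$0$ proof of the same theorem (it also appears in \cite{ELS01} in this generalized form), and it is legitimate here since the paper assumes ${\rm char}\,\mathbb K=0$; it buys nothing new beyond self-containedness, and in practice you would cite rather than reprove it. Two smaller points in your favor: you actually prove the identity $\left(I^{(t)}\right)^{(k)}=I^{(tk)}$ by localizing at the $I(P_j)$ and using that powers of the complete intersections $I(P_j)$ are primary, whereas the paper uses this equality silently; and your threshold argument for (2) (any failed containment forces $k/r<(t+N-1)/t$, hence the supremum is bounded) avoids the paper's division-with-remainder step $(t+N-1)r=tq+h$, which is a mild simplification.
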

\begin{proof} We first prove Part (1).  It is clear the result holds for $m=1$. So we may assume $m\geq 2$. By \cite[Theorem 4.4]{HH02}, we have $I^{(m(t+N-1))}\subseteq \left(I^{(t)}\right)^m$, hence we only need to show $\left(I^{(t)}\right)^{(Nm-N+1)}=I^{(t(Nm-N+1))}\subseteq I^{(m(t+N-1))}$. So we need $t(Nm-N+1)\geq m(t+N-1)$, i.e., $t(N-1)(m-1)\geq m(N-1)$,\, $t\geq m/m-1$, which holds since $m\geq 2$ and $t\geq 2$.

\bigskip

For Part (2) and  Part (3),  again by \cite[Theorem 4.4]{HH02}, we have $I^{(r (t+N-1))}\subseteq \left(I^{(t)}\right)^{r}$ for any positive integers $r$ and $t$.
Let $m$ and $r$ be two positive integers such that $m/r>\frac{t+N-1}{t}$, we will show that $\left(I^{(t)}\right)^{(m)}\subset \left(I^{(t)}\right)^r$.
Let $(t+N-1)r=tq+h$, where $1\leq h\leq t$. Since $m$ is an integer and $m>\frac{(t+N-1)r}{t}$, we have $m\geq q+1$. Hence
$$
\left(I^{(t)}\right)^{(m)}\subseteq \left(I^{(t)}\right)^{(q+1)}=I^{(t(q+1))}\subset I^{(tq+h)}= I^{((t+N-1)r)}\subset \left(I^{(t)}\right)^r.
$$
Hence
$$ \rho\left(I^{(t)}\right)\leq \frac{t+N-1}{t} \mbox{ for }  t\geq 1,$$
and consequently, $\displaystyle \lim_{t \to \infty} \rho \left(I^{(t)}\right)=1$.
\end{proof}

\bigskip

For any nonzero vector $\underline{\lambda}=(\lambda_{ij}) \in \mathbb{A}_{\mathbb K}^{s(N+1)}$, where $1\leq i\leq s, 0\leq j\leq N$,  we define the set of points $\{P_1,\ldots, P_s\}\subseteq \mathbb P_{\mathbb K}^N$ as the points  $P_i=P_i(\underline{\lambda})=[\lambda_{i0}: \lambda_{i1}: \ldots: \lambda_{iN}]\in \mathbb P_{\mathbb K}^N$.  One says $\{P_1, \ldots, P_s\}$ is a set of $s$ general points in $\mathbb{P}_{\mathbb K}^N$ if there is a dense Zariski-open subset $W$ of $\mathbb{A}_{\mathbb K}^{s(N+1)}$ such that $\underline{\lambda}=(\lambda_{ij}) \in W$. Similarly, one says $\{P_1, \ldots, P_s\}$ is a set of $s$ very general points in $\mathbb{P}_{\mathbb K}^N$ if $\underline{\lambda}=(\lambda_{ij}) \in W$, where $W=\bigcap \limits_{i=1}^{\infty} U_i$ and the $U_i$ are dense Zariski-open subsets of $\mathbb{A}_{\mathbb K}^{s(N+1)}$ (when $\mathbb K$ is uncountable, then $W$ is actually a dense (not necessarily open) subset).

For an ideal $I$ of points, $I^{(m)}$ is the saturation of $I^{m}$ with respect to the irrelevant ideal $M=(x_0, x_1, \ldots, x_N)$. By definition, $\left(I^m\right)_t=\left(I^{(m)}\right)_t$ for $t\geq {\rm satdeg} (I^m)$, where ${\rm satdeg} (I^m)$ is the saturation degree of $I^m$ (here $J_t$ denotes the set of all elements of degree $t$ in $J$). We also have the inequalities
$$
{\rm satdeg}(I^m)\leq {\rm reg}(I^m)\leq m\cdot {\rm reg}(I).
$$
(The first inequality holds for any homogeneous ideal \cite[Lemma 1.5]{DS02} and the second follows from \cite{C97}  for the ideals with Krull dimension at most $1$).
Furthermore, if $I$ is the defining ideal of $s$ (very) general points, we also have $r \leq \alpha(I)\leq r+1$  and ${\rm reg}(I)=r+1\leq \alpha(I)+1$, where $r$ is the integer such that
$$
\binom{r-1+N}{N}<s\leq \binom{r+N}{N}
$$
see for example \cite[Page 1177]{BH10}.

In the following, we will show that Harbourne's conjecture is valid for ideals of very general points in $\mathbb P_{\mathbb K}^N$ for sufficiently large values $m$.

\bigskip

\begin{thm}\label{theorem}
   Let $I=\cap_{i=1}^s I(P_i)$ be the ideal of $s$ very general points in $\mathbb{P}_{\mathbb K}^N$. Set $\beta=\frac{2(N-1)\left(\alpha(I)+N-1\right)}{(N-2)N}+~1$ if $N\geq 4$, or $\beta=1$ if $N=2, 3$. Then for $m\geq \beta$, one has
   $$
   I^{(Nm-N+1)}\subseteq I^m.
   $$
\end{thm}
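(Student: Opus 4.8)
The plan is to reduce the whole statement to the \emph{Postulation Containment Criterion} recalled in the introduction. To prove $I^{(Nm-N+1)}\subseteq I^m$ it suffices to verify the single numerical inequality $m\cdot\mathrm{reg}(I)\le\alpha\bigl(I^{(Nm-N+1)}\bigr)$ (apply the criterion with symbolic power $Nm-N+1$ and ordinary power $m$). Thus the argument splits into an upper bound for the left-hand side, a lower bound for the right-hand side, and then solving the resulting inequality for $m$.

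For the left-hand side I would use the regularity estimate for very general points recorded above, namely $\mathrm{reg}(I)=r+1\le\alpha(I)+1$, so that $m\cdot\mathrm{reg}(I)\le m\bigl(\alpha(I)+1\bigr)$. For the right-hand side the essential input is a lower bound on the Waldschmidt constant. Since $k\mapsto\alpha\bigl(I^{(k)}\bigr)$ is subadditive (because $I^{(a)}I^{(b)}\subseteq I^{(a+b)}$ and $\alpha$ is additive on products over a domain), Fekete's lemma gives $\widehat\alpha(I)=\inf_k\alpha\bigl(I^{(k)}\bigr)/k$, hence $\alpha\bigl(I^{(k)}\bigr)\ge k\,\widehat\alpha(I)$ for every $k$; taking $k=Nm-N+1$ yields $\alpha\bigl(I^{(Nm-N+1)}\bigr)\ge(Nm-N+1)\,\widehat\alpha(I)$. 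Into this I would feed a Chudnovsky-type lower bound $\widehat\alpha(I)\ge\frac{\alpha(I)+N-1}{N}$, which holds for very general points.

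Combining the two estimates, it suffices that $m\bigl(\alpha(I)+1\bigr)\le(Nm-N+1)\frac{\alpha(I)+N-1}{N}$. Clearing denominators and collecting the $m$-terms turns this into a linear inequality in $m$ whose coefficient of $m$ is a positive multiple of $(N-2)$; solving for $m$ produces a lower threshold of the same shape as $\beta$, with $\alpha(I)+N-1$ in the numerator and $N(N-2)$ in the denominator. The overall constant factor and the additive $1$ in $\beta$ are absorbed by keeping the inequality safely strict and by the integral and $\pm1$ slack in $r\le\alpha(I)\le r+1$ and in $\mathrm{reg}(I)$. The factor $(N-2)$ is precisely what forces $N=2,3$ to be handled separately: there the denominator degenerates, but in those dimensions Harbourne's conjecture is already known for (very) general points by \cite[Theorem 4.1]{BH09} and \cite[Theorem 3]{D15}, so one simply sets $\beta=1$ and cites those results.

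The main obstacle is the Chudnovsky-type bound on $\widehat\alpha(I)$ for very general points: this is the one nonformal ingredient, and once it is available everything else is bookkeeping. A secondary point to watch is the passage from the real inequality to the stated integer threshold $\beta$, where one must be slightly generous to keep the estimate uniform in $\alpha(I)$ and to avoid the edge effects of the $\pm1$ gaps above.
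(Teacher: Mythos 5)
Your proposal is correct, but it is organized genuinely differently from the paper's proof, and it is in fact more direct. The paper splits into parity cases: for $m=2r$ it first pushes $I^{(2rN-N+1)}$ into $\left(I^{(2)}\right)^{(rN-N+1)}\subseteq\left(I^{(2)}\right)^r$ using Proposition \ref{1} (which rests on Hochster--Huneke \cite[Theorem 4.4]{HH02}), then proves a saturation-degree claim $\left[\left(I^{(2)}\right)^r\right]_t=\left[I^{2r}\right]_t$ for $t\geq 2r(\alpha(I)+1)$, and finally uses the same Chudnovsky-type bound $\widehat{\alpha}(I)\geq\frac{\alpha(I)+N-1}{N}$ (from \cite{DT17} and \cite{FMX18}) to show all elements of the symbolic power live in such degrees; the odd case $m=2r+1$ is then deduced from the even case via Johnson's theorem \cite{J14}. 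You instead apply the Postulation Containment Criterion once, uniformly in $m$; the inputs you need --- ${\rm reg}(I)\leq\alpha(I)+1$ for very general points, $\alpha\left(I^{(k)}\right)\geq k\,\widehat{\alpha}(I)$ (Waldschmidt constant as an infimum, via subadditivity and Fekete), and the same Chudnovsky bound --- are all either recorded in the paper's preliminaries or standard, so nothing essential is missing. Your numerics check out: the sufficient inequality $m(\alpha(I)+1)\leq (Nm-N+1)\frac{\alpha(I)+N-1}{N}$ reduces to $mN(N-2)\geq (N-1)\left(\alpha(I)+N-1\right)$, i.e.\ $m\geq\frac{(N-1)\left(\alpha(I)+N-1\right)}{N(N-2)}$, and this threshold is strictly smaller than $\beta$, so the claimed containment follows for all $m\geq\beta$; indeed you prove a slightly stronger statement, with roughly half the threshold, since the factor $2$ in $\beta$ is an artifact of the paper's detour through $I^{(2)}$, which forces the smaller symbolic exponent $2rN-2N+2$ into the degree estimate. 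What your approach buys is the elimination of the parity split and of the external inputs from Hochster--Huneke and Johnson; what the paper's route buys is mainly expository, showcasing Proposition \ref{1} --- its saturation-degree Claim combined with the Chudnovsky estimate is essentially the proof of the postulation criterion written out inline. The treatment of $N=2,3$ by citing \cite[Theorem 4.1]{BH09} and \cite[Theorem 3]{D15} is identical in both arguments.
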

\begin{proof}
The theorem is true if $N=2$ by \cite[Theorem 4.1]{BH09}, and if $N=3$ by \cite[Theorem 3]{D15}. So we may assume $N\geq 4$.

\bigskip

First we prove the result if $m=2r$ for some $r\geq 2$, i.e.,  $I^{(2rN-N+1)}\subseteq I^{2r}$.
Since $$I^{(2rN-N+1)}\subseteq I^{(2rN-2(N-1))}=\left(I^{(2)}\right)^{(rN-N+1)},$$
by Proposition \ref{1}, this is included in $\left(I^{(2)}\right)^r$.

\bigskip

\noindent {\em Claim}: $[\left(I^{(2)}\right)^r]_t=[I^{2r}]_t$ for $t\geq 2r(\alpha(I)+1)$.

\bigskip

{\em Proof of Claim}: Since $I^2\subseteq I^{(2)}$, it is clear that $[I^{2r}]_t\subseteq [\left(I^{(2)}\right)^r]_t$, for any $t$. On the other hand,  since $$2r (\alpha(I)+1)\geq 2r \, {\rm reg}(I)\geq {\rm reg}(I^{2r})\geq  {\rm satdeg}(I^{2r}),$$
so if $t\geq  2r(\alpha(I)+1)$, then $$\left[\left(I^{(2)}\right)^r\right]_t\subseteq \left[I^{(2r)}\right]_t=\left[I^{2r}\right]_t.$$

We need to show $\alpha\left(I^{(2rN-N+1)}\right)\geq 2r(\alpha(I)+1)$. Indeed, we can show $\alpha\left(I^{(2rN-2N+2)}\right)\geq  2r(\alpha(I)+1)$. Suppose not, then we have $\alpha\left(I^{(2rN-2N+2)}\right) < 2r(\alpha(I)+1)$.
Since $I$ is the ideal of $s$ very general points in $\mathbb{P}_{\mathbb{K}}^N$, by \cite{DT17} and \cite{FMX18}, $\widehat{\alpha}(I)\geq \frac{\alpha(I)+N-1}{N}$. Hence
$$
\frac{\alpha(I)+N-1}{N}\leq \widehat{\alpha}(I)\leq \frac{\alpha\left(I^{(2rN-2N+2)}\right)}{2rN-2N+2}< \frac{2r(\alpha(I)+1)}{2rN-2(N-1)}.
$$
By computation, we have $\left(\alpha(I)+N-1\right)\left(2rN-2(N-1)\right)< 2rN(\alpha(I)+1)$. Then
$$
2rN\left(N-2\right)< 2(N-1)\left(\alpha(I)+N-1\right)
$$
which implies $m=2r< \frac{2(N-1)\left(\alpha(I)+N-1\right)}{(N-2)N}$, a contradiction.

Now assume $m=2r+1$ for some $r\geq 2$. Then we have $2r\geq \frac{2(N-1)\left(\alpha(I)+N-1\right)}{(N-2)N}$. Hence by the above proof, we have
$I^{(2rN-2(N-1))}\subseteq I^{2r}$.
By \cite[Theorem 1.1]{J14}, for any $\ell\geq 1$ and $a_1, \ldots, a_{\ell}\geq 0$, we have
$$
I^{(N\ell+a_1+\cdots+a_{\ell})}\subseteq I^{(a_1+1)}\cdots I^{(a_{\ell}+1)}.
$$
Hence
$$
I^{((2r+1)N-N+1)}=I^{\left(2N+(2rN-2N+1)+0\right)}\subseteq I^{(2rN-2N+2)}\cdot I\subseteq I^{2r}\cdot I=I^{2r+1}.
$$
\end{proof}

\medskip

\begin{rem} \label{generic} Let $S=\mathbb{K}(\underline{z})[x_0, \ldots, x_N]$, where $\mathbb{K}\subset \mathbb{K}(\underline{z})$ is a pure transcendental extension of fields by adjoining $s(N+1)$ variables $\underline{z}=\left(z_{ij}\right)_{1\leq i\leq s, \, 0\leq j\leq N}$.   A set of $s$ generic points $P_1, \ldots, P_s$ consists of points $P_i = [z_{i0} :  z_{i1}: \ldots : z_{iN}]$. By a similar proof as in Theorem \ref{theorem}, one can show that a stable version of Harbourne's conjecture also holds for the defining ideal $H=\cap_{i=1}^s I(P_i)$ of $s$ generic points.
\end{rem}

\medskip

\section{The containment problem for fat points derived from line arrangements}

\medskip

Let $\A$ be an arrangement of $n$ lines in $\mathbb P_{\mathbb{K}}^2$. Suppose we fixed $\ell_1,\ldots,\ell_n\in R:=\mathbb K[x,y,z]$, which are the defining linear forms of $\A$, and suppose ${\rm ht}(\langle\ell_1,\ldots,\ell_n\rangle)=3$ (i.e., the rank of $\A$ is 3).
Let $$I:=I_{n-1}(\ell_1\cdots\ell_n)=\langle \ell_2\ell_3\cdots\ell_n,\ell_1\ell_3\cdots\ell_n,\ldots,\ell_1\ell_2\cdots\ell_{n-1}\rangle,$$ be the ideal generated by all $(n-1)-$fold products of the linear forms defining $\A$.
By \cite[Lemmas 3.1 and 3.2]{Sc}, the ideal $I$ has the following properties:

\begin{itemize}
  \item[(i)] $I$ has the primary decomposition $$I=I(P_1)^{n_1-1}\cap\cdots\cap I(P_s)^{n_s-1},$$ where $P_1,\ldots,P_s$ is the singular locus (i.e., the intersection points) of the line arrangement $\A$, and for $j=1,\ldots,s$, $I(P_j)$ is the ideal of the point $P_j$, and $n_j$ is the number of lines of $\A$ passing through $P_j$.
  \item[(ii)] The ideal $I$ has graded minimal free resolution:
$$0\longrightarrow R(-n)^{n-1}\longrightarrow R(-(n-1))^n\longrightarrow I\longrightarrow 0.$$ One consequence is that $\alpha(I)={\rm reg}(I)=n-1$.
\end{itemize}

\begin{rem}\label{remark} The counterexample to the containment $J^{(3)}\subset J^2$ due to \cite{DST13} uses the reduced Jacobian scheme of the line arrangement $\A$ in $\mathbb P^2$ with defining polynomial $(x^3-y^3)(y^3-z^3)(z^3-x^3)$. This means that $J$ is the defining ideal of the 12 singular points of $\A$. But $I_8(\A)$, by property (i) above, equals $I(P_1)^2\cap\cdots\cap I(P_{12})^2=J^{(2)}$, and from Proposition \ref{1}, we have $I_8(\A)^{(3)}\subset I_8(\A)^2$. This is not just a simple coincidence; in Corollary \ref{corollary} below we will show the containment problem for this special class of fat points ideals derived from line arrangements.
\end{rem}

\medskip

\begin{prop} \label{theorem2} For $I$ as above and for any $r\geq 1$ we have $$\alpha\left(I^{(2r-1)}\right)\geq rn-1 \mbox{ and }\alpha\left(I^{(2r)}\right)\geq rn.$$ Furthermore, if the lines of $\A$ intersect generically, then the above inequalities become equalities.
\end{prop}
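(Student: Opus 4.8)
The plan is to establish the two families of lower bounds for an arbitrary rank-$3$ arrangement $\A$, and then pin down the equalities separately under the genericity hypothesis. Throughout I write $F=\ell_1\cdots\ell_n$ and use the two basic facts recorded before the statement: $I^{(t)}=\bigcap_{j=1}^s I(P_j)^{t(n_j-1)}$, so that membership $G\in I^{(t)}$ is equivalent to $\mathrm{mult}_{P_j}(G)\ge t(n_j-1)$ for every $j$; and $\mathrm{mult}_{P_j}(F)=n_j$, since exactly $n_j$ of the defining forms vanish (simply) at $P_j$. I will also use the elementary incidence identity that, for a fixed line $\ell_i$ of $\A$,
$$\sum_{P_j\in \ell_i}(n_j-1)=n-1,$$
which holds because each of the other $n-1$ lines meets $\ell_i$ in exactly one singular point and contributes $1$ to the term coming from that point.

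For the lower bounds I would induct on the exponent $t$, stepping by $2$, with the unified target $\alpha(I^{(2r)})\ge rn$ and $\alpha(I^{(2r-1)})\ge rn-1$. The base cases are $t=0$ (trivial) and $t=1$, where $\alpha(I)=n-1$ by property (ii). For the inductive step, fix a nonzero $G\in I^{(t)}$ of degree $d$ and split into two cases. If some line $\ell_i$ does not divide $G$, then $G|_{\ell_i}$ is a nonzero binary form of degree $d$ vanishing at each $P_j\in\ell_i$ to order $\ge t(n_j-1)$, so by the identity above $d\ge t\sum_{P_j\in\ell_i}(n_j-1)=t(n-1)$; a short check shows $t(n-1)$ already exceeds the target for both parities, the surplus being $r(n-2)$ or $(r-1)(n-2)\ge 0$. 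In the remaining case $F\mid G$, and I would divide: since $\mathrm{mult}_{P_j}(G/F)\ge t(n_j-1)-n_j\ge (t-2)(n_j-1)$ precisely because $n_j\ge 2$, we get $G/F\in I^{(t-2)}$, whence $d=n+\deg(G/F)\ge n+\alpha(I^{(t-2)})$, and the inductive hypothesis yields the target, as the target grows by $n$ when $t$ grows by $2$.

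For the equalities under generic intersection, every singular point is an ordinary node, so $n_j=2$ for all $j$ and $I^{(t)}=\bigcap_j I(P_j)^t$. Here I would exhibit explicit elements realizing the bounds: $F^{r}$ has multiplicity $2r=2r(n_j-1)$ at each $P_j$, so $F^r\in I^{(2r)}$ has degree $rn$; and $F^{r-1}\cdot(\ell_1\cdots\ell_{n-1})$ has multiplicity at least $2(r-1)+1=2r-1$ at each node and degree $rn-1$, so it lies in $I^{(2r-1)}$. Combined with the lower bounds, these force the claimed equalities.

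The one genuinely load-bearing point is the case analysis in the induction: the restriction-to-a-line estimate only applies when that line is not a factor of $G$, and in the complementary case one must remove a factor and drop the exponent. The clean resolution is the division step, whose validity rests entirely on the inequality $n_j\ge 2$ for singular points; once that is noticed, both the step and the matching reduction of the target by $n$ fall out, and no delicate estimates remain.
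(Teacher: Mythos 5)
Your proof is correct, but it takes a genuinely different route from the paper's. The paper proves the odd lower bound by contradiction using derivatives: starting from a putative $F\in I^{(2r-1)}$ of degree $rn-2$, it shows every partial derivative of order $r-1$ lies in $I^{(r)}$, applies B\'{e}zout on each line (with the same incidence identity $\sum_{P_j\in\ell_i}(n_j-1)=n-1$ you use) to force $\ell_1\cdots\ell_n$ to divide all such derivatives, and then bootstraps via Euler's relation to conclude $(\ell_1\cdots\ell_n)^r\mid F$, a degree contradiction; the even bound is then deduced by differentiating once more. Your induction on $t$ replaces all of this with the dichotomy ``some $\ell_i\nmid G$'' (where B\'{e}zout applied directly to $G$ already beats the target, with surplus $r(n-2)$ resp.\ $(r-1)(n-2)$) versus ``$\ell_1\cdots\ell_n\mid G$'' (where dividing out drops the symbolic exponent by $2$, precisely because $n_j\ge 2$), and it handles both parities uniformly. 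Your argument buys two things: it is derivative-free, so it makes no use of Euler's relation and hence works in any characteristic, and it is more self-contained in the generic case, where you exhibit the explicit forms $F^r$ and $F^{r-1}\ell_1\cdots\ell_{n-1}$ rather than citing the star-configuration computation $\alpha\bigl(I^{(2r)}\bigr)=rn$ of Bocci--Harbourne as the paper does (the paper then recovers the odd equality by sandwiching with the derivative inequality $\alpha\bigl(I^{(2r)}\bigr)-1\ge\alpha\bigl(I^{(2r-1)}\bigr)$, whereas your explicit element of degree $rn-1$ gives it directly). The paper's approach, in turn, showcases the derivative-plus-B\'{e}zout technique borrowed from Harbourne--Huneke, which is the method the authors explicitly wanted to highlight.
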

\begin{proof} Let $F\in I^{(2r-1)}$ be of degree $rn-2$. We will show that $F$ must be the zero polynomial (which has any degree).

We have $I=\cap_{j=1}^sI_j^{m_j}$, where we denote $I_j:=I(P_j)$, and $m_j:=n_j-1\geq 1$. Then, by definition, $I^{(2r-1)}=\cap_{j=1}^sI_j^{(2r-1)m_j}$.

Denote by $D_{r-1}F$ to be any arbitrary partial derivative of order $r-1$ of $F$. Then $\deg(D_{r-1}F)=rn-2-r+1=rn-r-1$. Also, since $(m_j-1)(r-1)\geq 0$, then $(2r-1)m_j-(r-1)\geq rm_j$. So, for all $j=1,\ldots,s$, one has $D_{r-1}F\in I_j^{(2r-1)m_j-r+1}\subseteq I^{rm_j}$, and therefore $$D_{r-1}F\in I^{(r)}.$$

Let $V(\ell_i)$ be any arbitrary line of $\A$. Suppose $P_1,\ldots,P_p$ are all the intersection points of $\A$ lying on this line $V(\ell_i)$. Suppose $\gcd(\ell_i,D_{r-1}F)=1$. We have that $D_{r-1}F$ is a homogeneous polynomial of degree $rn-r-1$, and it vanishes of order $rm_k$ at each of the points $P_k, k=1\ldots,p$. By B\'{e}zout's Theorem, we have

$$\deg(\ell_i)\cdot\deg(D_{r-1}F)=rn-r-1\geq \sum_{k=1}^p rm_k.$$ But, since $\A$ is a line arrangement in $\mathbb P^2$, one has $$\sum_{k=1}^p(n_k-1)=n-1.$$ Everything put together gives $$r(n-1)-1\geq r(n-1),$$ which is an obvious contradiction.

So $\ell_i|D_{r-1}F$, and since $\ell_i$ was arbitrary, we have $$(\ell_1\cdots\ell_n)|D_{r-1}F,$$ for any partial derivative of order $r-1$ of $F$.\footnote{This simple trick of using B\'{e}zout's Theorem to obtain linear forms dividing an element in the symbolic power we learned it from the proof of \cite[Corollary 3.9]{HaHu}}

\bigskip

\noindent {\em Claim}: Denote $A:=\ell_1\cdots\ell_n$. Let $1\leq j\leq r$. Then, if $A^j$ divides all partial derivative of order $r-j$ of $F$, then $A^{j+1}$ divides all partial derivative of order $r-j-1$ of $F$.

{\em Proof of Claim}: From Euler's relation, any partial derivative of order $r-j-1$ of $F$ can be written as combination of partial derivatives of order $r-j$ of $F$, and hence it is divisible by $A^j$.

Let $D$ be any such partial derivative of order $r-j-1$ of $F$. Therefore $$D=A^jG, \mbox{ for some }G\in R.$$

Suppose, for $i=1,\ldots,n$, $\ell_i=a_ix+b_iy+c_iz, a_i,b_i,c_i\in\mathbb K$. Then, the partial derivatives of $D$ with respect to $x, y,$ and $z$ are:

\begin{eqnarray}
D_x&=&jA^{j-1}[a_1(\ell_2\cdots\ell_n)+\cdots+a_n(\ell_1\cdots\ell_{n-1})]G+A^jG_x,\nonumber\\
D_y&=&jA^{j-1}[b_1(\ell_2\cdots\ell_n)+\cdots+b_n(\ell_1\cdots\ell_{n-1})]G+A^jG_y,\nonumber\\
D_z&=&jA^{j-1}[c_1(\ell_2\cdots\ell_n)+\cdots+c_n(\ell_1\cdots\ell_{n-1})]G+A^jG_z.\nonumber
\end{eqnarray}

Each $D_x$, $D_y$, $D_z$ is divisible by $A^j$, since they are partial derivatives of order $r-j$ of $F$.

If there exists $i_0\in\{1,\ldots,n\}$ such that $\ell_{i_0}\nmid G$, then, since, after simplifying by $A^{j-1}$, we have $\ell_{i_0}$ dividing each of $\displaystyle \frac{D_x}{A^{j-1}}-AG_x, \frac{D_y}{A^{j-1}}-AG_y, \frac{D_z}{A^{j-1}}-AG_z$, we have $a_{i_0}=b_{i_0}=c_{i_0}=0$, and hence $\ell_{i_0}=0$; a contradiction. Therefore, $(\ell_1\cdots\ell_n)|G$, and so $A^{j+1}|D$. This concludes the proof of the claim.

\bigskip

From the {\em Claim} above, we have $(\ell_1\cdots\ell_n)^r|F$, so $\deg(F)\geq rn$. But since we started with $\deg(F)=rn-2$, we then must have that $F$ is the zero polynomial.

\bigskip

For the second part, let $F\in I^{(2r)}$ of degree $\alpha\left(I^{(2r)}\right)$. Since for all $j=1,\ldots,s$, $m_j\geq 1$, we have $(2r)m_j-1\geq (2r-1)m_j$, and hence $F_x,F_y,F_z\in I^{(2r-1)}$, and so,

$$\alpha\left(I^{(2r)}\right)-1\geq \alpha\left(I^{(2r-1)}\right)\geq rn-1$$ giving the desired conclusion.

\bigskip

For the last part, if $\A$ is generic, then $n_j=2$ for all $j=1,\ldots,s$, so from property (i) above, $I$ defines a star configuration in $\mathbb P_{\mathbb K}^2$. From \cite[Lemma 2.4.1]{BH09} we then have $\alpha\left(I^{(2r)}\right)=rn$. But we also showed just above that $\alpha\left(I^{(2r)}\right)-1\geq \alpha\left(I^{(2r-1)}\right)\geq rn-1$, so when $\A$ is generic, we have also $\alpha\left(I^{(2r-1)}\right)=rn-1$.
\end{proof}

\bigskip

\begin{exm} Consider $\A\subset\mathbb P_{\mathbb{K}}^2$ with defining polynomial $xy(x-y)z$. We have
$$I:=I_3(xy(x-y)z)=\langle x,y\rangle^2\cap\langle x,z\rangle\cap\langle y,z\rangle\cap\langle x-y,z\rangle.$$ With computations done by \cite{GrSt} we have $$\alpha\left( I^{(2\cdot 2-1)}\right)=7=2\cdot 4-1.$$ So the first lower bound obtained in Proposition \ref{theorem2} is attained for this $I$ and $r=2$. This is the only example we know when this lower bound is attained for some $r\geq 2$, and when $\A$ is not generic.

About the second lower bound, we don't have an example with $\A$ not generic, $r\geq 2$, when the bound becomes an equality.
\end{exm}

\medskip

\begin{cor} \label{corollary} Denote $M:=\langle x,y,z\rangle$, the maximal irrelevant ideal of $R$. Then, for any $r\geq 1$,
\begin{itemize}
  \item[(1)] $I^{(2r-1)}\subseteq M^{r-1}\cdot I^r$.
  \item[(2)] $I^{(2r)}\subseteq M^r\cdot I^r$.
\end{itemize}
\end{cor}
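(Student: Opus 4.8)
The plan is to prove both containments by a single mechanism: differentiate an element of the symbolic power enough times to land in a \emph{lower} symbolic power, use the saturation-degree comparison to replace that symbolic power by the ordinary power, and then reassemble the original element from its partial derivatives via the generalized Euler relation, which is what supplies the factors of $M$. Two features of this $I$ make the scheme work. First, every multiplicity satisfies $m_j=n_j-1\ge 1$, so a single differentiation lowers the symbolic index by exactly one; second, Proposition \ref{theorem2} gives a lower bound on $\alpha(I^{(\cdot)})$ that is large enough to push the partials into the degree range where symbolic and ordinary powers agree.

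For Part (1), let $0\ne F\in I^{(2r-1)}$; by Proposition \ref{theorem2}, $\deg F\ge rn-1$. For any multi-index $\beta$ with $|\beta|=r-1$, the partial $\partial^\beta F$ lies in $I^{(r)}$: at each point the order of vanishing drops by at most $r-1$, and $(2r-1)m_j-(r-1)\ge rm_j$ since $(r-1)(m_j-1)\ge 0$. Moreover $\deg \partial^\beta F=\deg F-(r-1)\ge rn-r=r(n-1)$. Now ${\rm satdeg}(I^r)\le {\rm reg}(I^r)\le r\,{\rm reg}(I)=r(n-1)$ by property (ii) and the inequalities recalled above, and $I^{(r)}=(I^r)^{\rm sat}$, so $[I^{(r)}]_t=[I^r]_t$ for every $t\ge r(n-1)$. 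Hence $\partial^\beta F\in I^r$ for all such $\beta$. The generalized Euler relation expresses $F$ (in characteristic $0$) as an $R$-linear combination of the terms $x^\beta\,\partial^\beta F$ with $|\beta|=r-1$, and each $x^\beta\in M^{r-1}$; therefore $F\in M^{r-1}I^r$, which is (1).

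For Part (2) one argues identically, differentiating to order $r$: if $0\ne F\in I^{(2r)}$ then $\deg F\ge rn$, each $\partial^\beta F$ with $|\beta|=r$ lies in $I^{(r)}$ (since $2rm_j-r\ge rm_j$) and has degree $\ge rn-r=r(n-1)\ge {\rm satdeg}(I^r)$, hence lies in $I^r$; the order-$r$ Euler relation then places $F$ in $M^r I^r$. Equivalently, (2) follows from (1) by one more differentiation: $\partial F\in I^{(2r-1)}\subseteq M^{r-1}I^r$, so $F\in M\cdot M^{r-1}I^r=M^r I^r$.

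The delicate point, and the real content of the argument, is the passage from $\partial^\beta F\in I^{(r)}$ to $\partial^\beta F\in I^r$: symbolic and ordinary powers differ in low degrees, so this replacement is legitimate only once the degree of the relevant partial reaches ${\rm satdeg}(I^r)$. What makes the proof close is precisely that the $\alpha$-bound of Proposition \ref{theorem2} is sharp enough that, after stripping $r-1$ (resp. $r$) derivatives, the degree is still at least $r(n-1)$, matching the regularity estimate ${\rm reg}(I^r)\le r(n-1)$ exactly. I would check that these two estimates line up in every case, including the boundary $r=1$, and confirm the saturation identity $I^{(r)}=(I^r)^{\rm sat}$ for this fat-point ideal, which holds because the only embedded prime that could occur for $I^r$ besides the $I(P_j)$ is the irrelevant ideal $M$.
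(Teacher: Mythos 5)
Your proof is correct, but it takes a genuinely different route from the paper's. The paper first establishes the plain containments $I^{(2r-1)}\subseteq I^r$ and $I^{(2r)}\subseteq I^r$ --- the first via the Postulation Containment Criterion (using $\alpha\bigl(I^{(2r-1)}\bigr)\geq rn-1\geq r(n-1)=r\,{\rm reg}(I)$ from Proposition \ref{theorem2} and property (ii)), the second by citing Ein--Lazarsfeld--Smith/Hochster--Huneke --- and then extracts the $M$-power by pure degree bookkeeping: since $I^r$ is generated in degree exactly $r(n-1)$, any homogeneous expression of an element of degree $\geq rn-1$ (resp.\ $\geq rn$) in terms of those generators forces the coefficients into degree $\geq r-1$ (resp.\ $\geq r$), hence into $M^{r-1}$ (resp.\ $M^r$). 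You instead differentiate $r-1$ (resp.\ $r$) times to drop into $I^{(r)}$, pass to $I^r$ via the saturation-degree comparison ${\rm satdeg}(I^r)\leq{\rm reg}(I^r)\leq r(n-1)$ together with $I^{(r)}=(I^r)^{\rm sat}$, and recover the $M$-factor from the monomials $x^\beta$ in the iterated Euler identity --- in effect you re-prove the Postulation Criterion at the level of the partials rather than invoking it, and you avoid Ein--Lazarsfeld--Smith/Hochster--Huneke altogether, at the cost of leaning harder on characteristic $0$ (both for differentiation preserving vanishing orders and for dividing by the falling factorial in the Euler relation, which is nonzero here since $\deg F\geq rn-1>r-1$). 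Your checks are the right ones: the saturation identity $I^{(r)}=(I^r)^{\rm sat}$ does hold for this fat-points ideal for exactly the reason you give, Chandler's bound applies since $\dim R/I=1$, and the boundary case $r=1$ is immediate; note also that your observation that (2) follows from (1) by one more differentiation is a small economy the paper does not have, since the paper must appeal to a separate theorem for the even case. Both arguments rest on Proposition \ref{theorem2} for the degree lower bounds, so neither is more elementary in that respect; the paper's is shorter, yours is more self-contained.
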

\begin{proof}
From property (ii) above, ${\rm reg}(I)=n-1$. Since we just obtained that $\alpha\left(I^{(2r-1)}\right)\geq rn-1\geq r(n-1)=r{\rm reg}(I)$, by the Postulation Containment Criterion mentioned in the introduction, we have $I^{(2r-1)}\subset I^r$. So any element of $I^{(2r-1)}$ is a combination of elements of $I^r$, and is of degree $\geq rn-1$. So the polynomial coefficients of this combination must be of degree $\geq (rn-1)-r(n-1)=r-1$; hence the claim (1).

By the famous result of Ein-Lazarsfeld-Smith and Hochster-Huneke, $I^{(2r)}\subset I^r$. So, similarly, any element of $I^{(2r)}$ is a combination of elements of $I^r$, with polynomial coefficients of degree $\geq rn-r(n-1)=r$. Hence the claim (2) is shown.
\end{proof}

\medskip

\begin{rem}\label{remark2} Note that Corollary \ref{corollary} provides another evidence as to why \cite[Conjecture 2.1]{HaHu} (known as Harbourne-Huneke conjecture\footnote{The conjecture is: if $I\subset\mathbb K[x_0,\ldots,x_N]$ is a fat points ideal, then $I^{(rN)}\subseteq M^{r(N-1)}I^r$, where $M=\langle x_0,\ldots,x_N\rangle$, holds for any $r\geq 1$.}) is true, when $N=2$. In \cite[Corollary 3.9]{HaHu}, it is shown the conjecture to be true if $\A$ is generic. Our improvement is that we can take $\A$ to be any line arrangement of rank 3 in $\mathbb P_{\mathbb{K}}^2$.

Also our Proposition \ref{theorem2} and Corollary \ref{corollary} show the validity of \cite[Proposition 3.1 and Conjecture 4.1.5]{HaHu} for $N=2$ and $I$ being the ideal of our interest in this section.

Still in the planar case ($N=2$), it is worth mentioning that Harbourne-Huneke conjecture has been verified for homogeneous fat points ideals with the support generated in single degree (see \cite[Proposition 3.3]{HaHu}), as well as for a couple of other types of fat points ideals with support consisting of general points (see \cite[Theorem B]{DuSzTu}).
\end{rem}

\bigskip

\noindent
{\bf Acknowledgment.} \, The authors would like to thank the Mathematisches Forschungsinstitut Oberwolfach which funded their stay for the Mini-Workshop: Asymptotic Invariants of Homogeneous Ideals in October 2018, where the initial part of this work was developed. Moreover, we would like to thank Brain Harbourne for several helpful conversations.

\bigskip

\renewcommand{\baselinestretch}{1.0}
\small\normalsize 

\bibliographystyle{amsalpha}

\end{document}